\theoremstyle{plain}
\newtheorem{thm}{Theorem}[section]
\newtheorem{lem}[thm]{Lemma}
\theoremstyle{definition}
\newtheorem{defn}[thm]{Definition}
\newtheorem{aDD^+m}[thm]{ADD^+endum}
\theoremstyle{remark}
\newtheorem{rmk}[thm]{Remark}
\newcommand{\bbC}{\mathbb{C}} 
\newcommand{\bbP}{\mathbb{P}}
\newcommand{\PSL}{\text{PSL}}
\newcommand*{\defeq}{\mathrel{\vcenter{\baselineskip0.5ex \lineskiplimit0pt
			\hbox{\scriptsize.}\hbox{\scriptsize.}}}%
	=}
\DeclareMathOperator{\VD}{VD}
\DeclareMathOperator{\HD}{HD}
\DeclareMathOperator{\Jac}{Jac}
\title{Manhattan curves in complex dynamics
and \\ asymptotic correlation
of multiplier spectra}
\author{Fabrizio Bianchi}
\address{Dipartimento di Matematica, Università di Pisa, Largo Bruno Pontecorvo 5, 56127 Pisa, Italy}
 \email{fabrizio.bianchi$@$unipi.it}
\author{Yan Mary He}
\address{Department of Mathematics\\
	University of Oklahoma\\
	Norman, OK 73019}
\email{he$@$ou.edu}
\date{\today}
\begin{document}

\maketitle

\begin{center}
\emph{Dedicated to Manfred Denker for 
his 60th birthday.}
\end{center}

\begin{abstract}
The Manhattan curve for a pair of hyperbolic structures (possibly with cusps) on a given surface is a geometric object that encodes the growth rate of lengths of closed geodesics with respect to the two different hyperbolic metrics. It has been extensively studied as a way to understand geodesics on surfaces, the thermodynamic formalism of the geodesic flows and
comparison of hyperbolic metrics.

Via Sullivan's dictionary, in this paper, we define and study
the Manhattan curve for a pair of 
hyperbolic rational
maps on $\mathbb C\mathbb P^1$,
and more generally of
holomorphic endomorphisms of $\mathbb{C}\mathbb{P}^k$.
We discuss several counting results
for the multiplier spectrum 
and show that the Manhattan curve for two holomorphic endomorphisms is related to the correlation number of their multiplier
spectra.
\end{abstract}

\section{Introduction}
If $S$ is a closed surface of genus at least 2, a hyperbolic structure on $S$ can be identified with a discrete and faithful representation $\rho \colon \pi_1S \to \PSL(2,\mathbb{R})$. The image $\Gamma\defeq \rho(\pi_1S) < \PSL(2,\mathbb{R})$ is called a convex cocompact Fuchsian group; namely, it is a discrete subgroup of $\PSL(2,\mathbb{R})$ and the convex hull of the limit set
of $\Gamma$ in $\mathbb{H}^2$ has compact quotient. Moreover, we have $S = \mathbb{H}^2/\Gamma$, where $\mathbb{H}^2$ denotes the hyperbolic plane. See \cite{Nicholls89,KatokS_92} for more details on Fuchsian groups. 

Given a convex cocompact Fuchsian group $\Gamma$, the classical {\it Poincar\'e series} of $\Gamma$ is defined as
$$P_\Gamma(s,x) \defeq \sum_{\gamma \in \Gamma}e^{-sd(x,\gamma x)}$$
for every $s \in \mathbb{R}$
and $x \in \mathbb{H}^2$, where
$d(\cdot, \cdot)$ is the hyperbolic distance on $\mathbb{H}^2$. It is well-known \cite{Nicholls89}
that the Poincar\'e series has a critical exponent $\delta_\Gamma$ for which,
for all $x \in \mathbb{H}^2$,
$P_\Gamma(s,x)$ converges for all
$s > \delta_\Gamma$,  and diverges for all
$s <\delta_\Gamma$.
The critical exponent is also the Hausdorff dimension of the limit set of $\Gamma$ as well as the topological entropy of the geodesic flow on the unit tangent bundle of $S$; see for instance \cite{Sullivan79, Nicholls89}.

If $\rho_1$ and $\rho_2$ are two discrete faithful representations of $\pi_1S$ into $\PSL(2,\mathbb{R})$, for $(a,b) \in \mathbb{R}_{\ge 0}\times \mathbb{R}_{\ge 0}\setminus\{(0,0)\}$,
the {\it weighted Poincar\'e series for $\rho_1$ and $\rho_2$} is defined as
$$P_{\rho_1,\rho_2}^{a,b}(s,x_1,x_2) \defeq \sum_{g \in \pi_1S}e^{-s(a \cdot d(x_1, \rho_1(g) x_1)+b \cdot d(x_2,\rho_2(g) x_2))}$$
for every
$s \in \mathbb{R}$ and $x_1,x_2 \in \mathbb{H}^2$. One can then
denote by $\delta_{\rho_1,\rho_2}^{a,b}$ the critical exponent for $P_{\rho_1,\rho_2}^{a,b}$;
namely,
for every $x_1,x_2 \in \mathbb{H}^2$,
$P_{\rho_1,\rho_2}^{a,b}(s,x_1, x_2)$ converges if $s > \delta_{\rho_1,\rho_2}^{a,b}$
and diverges if $s <\delta_{\rho_1,\rho_2}^{a,b}$.

The {\it Manhattan curve} $\mathcal{C} = \mathcal{C}(\rho_1,\rho_2)$ is defined as
$$\{(a,b) \in \mathbb{R}_{\ge 0}\times \mathbb{R}_{\ge 0}\setminus\{(0,0)\}: \delta_{\rho_1,\rho_2}^{a,b}=1 \}.$$
In \cite{Burger93}, using the
Patterson-Sullivan theory \cite{Patterson76, Sullivan79},
Burger proved that if $\rho_1,\rho_2 \colon \pi_1S \to \PSL(2,\mathbb{R})$ are convex cocompact representations, then the Manhattan curve $\mathcal{C}(\rho_1,\rho_2)$ is $C^1$. This result was strengthened by Sharp \cite{Sharp98}, who 
used thermodynamic formalism to prove that $\mathcal{C}(\rho_1,\rho_2)$ is in fact real-analytic. In \cite{Kao20Manhattan, Kao20}, Kao studied the Manhattan curves for hyperbolic surfaces with cusps and for punctured surfaces. In particular, he proved that the Manhattan curves are real-analytic in these cases as well. Very recently, Kao--Martone \cite{KM24}  investigated Manhattan curves for cusped Hitchin representations. Cantrell--Tananka \cite{CT21}  investigated Manhattan curves associated to a pair of left-invariant hyperbolic metrics on a hyperbolic group.

Given a hyperbolic surface $\mathbb{H}^2/\rho_1(\pi_1S)$ and a conjugacy class $[\gamma]$ in $\pi_1S$, we denote by $l_1([\gamma])$ the hyperbolic length of the unique closed geodesic in the conjugacy class $[\gamma]$ on the surface $\mathbb{H}^2/\rho_1(\pi_1S)$. Pollicott-Sharp \cite{PS98} proved the following asymptotic growth rate for the length spectrum of primitive closed geodesics (i.e., geodesics which are not multiples of another geodesic) on $\mathbb{H}^2/\rho_1(\pi_1S)$: there exists $0<c<h$ 
(where $h$ is the entropy of the geodesic flow of $\mathbb{H}^2/\rho_1(\pi_1S)$)
such that
\begin{equation}\label{eq:intro-1}
{\rm Card} \{[\gamma] : l_1([\gamma]) \le T \} = Li(e^{hT}) +O(e^{cT}),
\quad 
\mbox {where } 
Li(y) = \int_2^y \frac{1}{\log u} du.
\end{equation}

The Manhattan curve $\mathcal{C}(\rho_1,\rho_2)$ is related to the \emph{correlation of 
the length
spectra}
on the hyperbolic surfaces $\mathbb{H}^2/\rho_1(\pi_1S)$ and $\mathbb{H}^2/\rho_2(\pi_1S)$. If $\rho_1$ and $\rho_2$ are not conjugate in $\PSL(2,\mathbb{R})$, Schwartz--Sharp
\cite{SchwartzSharp93}
proved that there exist  two
constants $C = C(\epsilon)>0$ and $\alpha \in (0,1)$ independent of $\epsilon$ such that, for all $\epsilon>0$, we have
\begin{equation}\label{eq:intro-2}
{\rm Card}\{[\gamma] : l_1([\gamma]),l_2([\gamma]) \in (T,T+\epsilon) \} \sim C\frac{e^{\alpha T}}{T^{3/2}} 
\quad \text{ as } 
\quad T \to \infty.
\end{equation}
The number $\alpha$ is called the {\it correlation number}
of $\rho_1$ and $\rho_2$.
Sharp \cite{Sharp98}
showed that $\alpha = a+b$ where $(a,b)$ is the point on the curve $\mathcal{C}(\rho_1,\rho_2)$ for which the tangent line has slope $-1$.

\medskip

Sullivan's dictionary is a set of correspondences,
most often via the lens of
conformal dynamics, between Kleinian groups and rational maps.
Inspired by Ahlfors’ finiteness theorem for Kleinian groups \cite{Ahlfors64}, Sullivan proved the no-wandering domain theorem for rational maps,
which completed the classification of the Fatou components \cite{Sullivan85}. Ever since, the analogy between Kleinian groups and rational maps has been extensively studied, see for instance
\cite{Luo_GeoFiniteDegenI,LLM24,McM_RibbonRtreeHoloDyn,McM_CompExpCircle}.

Via Sullivan's dictionary, McMullen
\cite{McMullen00}
introduced a {\it Poincar\'e series}
$P_f(s,x)$
for every
rational map $f$ on the Riemann sphere $\mathbb{P}^1= \mathbb C\mathbb{P}^1$ of degree $d\geq 2$ by
$$P_f(s,x) \defeq \sum_{n=1}^\infty \sum_{f^n(y)=x}|(f^n)'(y)|^{-s}.$$
The {\it critical exponent $\delta_f$} is defined as the supremum of those $s \ge 0$ such that $P_f(s,x) = \infty$ for all $x \in \bbP^1$. McMullen proved that if $f$ is a geometrically finite rational map (i.e., the critical points in the Julia sets are preperiodic), then $\delta_f$ equals the Hausdorff dimension ${\rm HD}(J(f))$
of the Julia set $J(f)$ and the Poincar\'e series $P_f(s,x)$ is divergent when $s = \delta_f$ for all $x \in \bbP^1$; see \cite[Theorem 1.2]{McMullen00}.

\medskip

In this paper, we define the Manhattan curve for a pair of rational maps and more generally a pair of
holomorphic endomorphisms of $\mathbb{P}^k = \bbC\bbP^k$
in the same hyperbolic component and study its regularity properties. In particular, we show that it is real-analytic; see Theorem \ref{thm_curve_ana}. 
As we will see, 
for
$k\geq 2$,
a central object in this study will be  
the {\it volume dimension} of hyperbolic Julia sets, which we introduced in our earlier paper \cite{BHMane} as an appropriate replacement of the Hausdorff dimension in higher dimensional expanding holomorphic dynamical systems. 

We also discuss several counting results for the multiplier spectrum in the spirit of \eqref{eq:intro-1} and \eqref{eq:intro-2}
and in particular we show that, under suitable assumptions, the Manhattan curve for two holomorphic endomorphisms is related to the correlation number of their multiplier spectra; see Theorem \ref{thm_correlation_num}. This result is new even in dimension $1$.

Manhattan curves are also
related to the so-called {\it pressure metrics}, which are constructed and studied on stable components of various families of rational maps and polynomials \cite{McMullen08,HeNie23}. We discuss this
relationship and other generalizations of Theorem \ref{thm_curve_ana} in Section \ref{sec_pressure}.

\subsection*{Acknowledgements}
The authors would like to thank the Institute for Advanced Study for their support and hospitality
through the Summer Collaborators Program.
This project has received funding from
 the
 Programme
 Investissement d'Avenir
(ANR QuaSiDy /ANR-21-CE40-0016,
ANR PADAWAN /ANR-21-CE40-0012-01,
ANR TIGerS /ANR-24-CE40-3604),
 from the MIUR Excellence Department Project awarded to the Department of Mathematics of the University of Pisa, CUP I57G22000700001,
 and
 from the PHC Galileo project G24-123.
The first
author is affiliated to the GNSAGA group of INdAM.

\section{Manhattan curves in complex dynamics}

\subsection{Definitions and statement}
For every 
$k\ge 1$ and $d\ge 2$, 
denote by ${\rm End}(k,d)$ the space of holomorphic endomorphisms of $\bbP^k$ of algebraic degree $d$. 
A {\it hyperbolic component} of ${\rm End}(k,d)$ is a connected component of the open subset given by \emph{hyperbolic} maps. Recall that a holomorphic endomorphism is {\it hyperbolic}
(or \emph{uniformly expanding on its Julia set})
if there exist $\eta > 1$ and $C>0$ such that
$||Df_x^n(v)|| > C\eta^n||v||$ for every $x \in J(f)$, $v \in T_x \mathbb P^k$, and $n\in\mathbb N$, where we denote by 
$J(f)$ the Julia set of $f$, i.e., the support of its
unique measure of maximal entropy \cite{FLM83,Lyubich82, Lyu83entropy,BD01,DS10}.

Let $\Omega\subset {\rm End} (k,d)$
be a hyperbolic component. Then for any $f_1,f_2 \in \Omega$, the dynamical systems $(J(f_1),f_1)$ and $(J(f_2),f_2)$ are conjugate by a H\"older continuous map. 
Observe that, while when $k= 1$ a stable component (in the sense of 
\cite{Lyu83typical, MSS83}) 
is automatically a hyperbolic component as soon as it contains one hyperbolic parameter, it is an open question whether this 
is the case for $k \geq 2$ (where stability is defined as in \cite{BBD18,B19}), 
apart from special families \cite{AB23}.
For $k\geq 2$
it is also not known whether there always exists a holomorphic motion for the Julia sets on any stable component, as is the case for $k=1$, see \cite{BBD18,BR24} for a weaker (measurable)
version of the holomorphic motion.

Fix $f_0 \in \Omega$ and denote by $J_0=J(f_0)$ the Julia set of $f_0$. Take $f_1,f_2 \in \Omega$ and denote by $h_i \colon J_0 \to J(f_i)$, $i=1,2$ the H\"older conjugacy.

\begin{defn}
For $f_1,f_2 \in \Omega$, $(a,b)\in\mathbb{R}_{\ge 0} \times \mathbb{R}_{\ge 0} \setminus \{(0,0)\}$,
$s \in \mathbb{R}$ and $x \in \mathbb{P}^k$, we define the formal {\it weighted Poincar\'e series} for $f_1$ and $f_2$ as
\begin{equation*}
P_{f_1,f_2}^{a,b}(s,x) \defeq \sum_{n=1}^{\infty} \sum_{f_0^n(y)=x}e^{-s(a\log|\Jac f_1^n(h_1(y))|+b\log|\Jac f_2^n(h_2(y))|)},
\end{equation*}
where we denote by $\Jac f$ 
the complex Jacobian of a holomorphic endomorphism $f$.
\end{defn}

\begin{defn}
We define the {\it critical exponent} $\delta_{f_1,f_2}^{a,b}$ to be the supremum of those $s \ge 0$ such that $P_{f_1,f_2}^{a,b}(s,x) =\infty$ for all $x \in \bbP^k$.
\end{defn}

\begin{defn}
The {\it Manhattan curve} $\mathcal{C}=\mathcal{C}(f_1,f_2)$ of $f_1,f_2$ is defined as
$$\{(a,b)\in\mathbb{R}_{\ge 0} \times \mathbb{R}_{\ge 0} \setminus \{(0,0)\}: \delta_{f_1,f_2}^{a,b} = 1\}.$$
\end{defn}

Before
stating
the main theorem of this section, we give a brief overview of the notion of {\it volume dimension} of a hyperbolic Julia sets,
that we introduced in our earlier paper \cite{BHMane}.
In complex dimension 1, 
the Hausdorff dimension of $J(f)$
is defined by means of covers of balls.
Thanks to the properties of holomorphic maps, and in particular
to the Koebe distortion theorem and related distortion results,
there is a natural interplay
between the Hausdorff dimension and the dynamics of a rational map. In particular, one can recover the Hausdorff dimension (which is a priori just a geometric quantity) from the dynamics, and in particular from the entropy and the Lyapunov exponents of the invariant measures of $J(f)$, see for instance
\cite{manning1984dimension,Mane}.

When $k\geq 2$,
a holomorphic endomorphism of $\mathbb{P}^k$
is no longer conformal.
As a consequence, 
the relation between the Hausdorff dimension 
and the dynamics is less clear. 
In \cite{BHMane}, by means of covers of Bowen balls and exploiting the distortion estimates in \cite{BDM,BD19},
we introduced
a dynamical
\emph{volume dimension}
for expanding sets and measures as an appropriate replacement for the Hausdorff dimension. This dimension
depends on 
the dynamics of $f$ to incorporate
the non-conformality
of holomorphic endomorphisms for $k\ge2$.
Several characterizations of the volume dimensions
are given in \cite{BHMane}, as generalizations to any dimensions of characterizations in 
\cite{DU1,DU2, McMullen00, PU}. 
In this paper, we will only need the following one.

\begin{thm}[\cite{BHMane}] \label{thm_bh}
Let $f$ be a hyperbolic endomorphism in ${\rm End} (k,d)$, $d\geq 2$. Then
twice
the volume dimension 
$\VD_f(J(f))$ of the Julia set $J(f)$ 
equals the first zero of the pressure function $P_J^+(t) \defeq \sup\{h_\nu(f)-tL_\nu\}$ where the supremum is taken over all the invariant probability measures supported on $J(f)$ and $L_\nu$ denotes the sum of (complex) Lyapunov exponents of $\nu$.

\end{thm}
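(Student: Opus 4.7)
The plan is to prove this by combining the variational principle for topological pressure with Bowen--Ruelle thermodynamic formalism, in the spirit of Bowen's classical formula for the dimension of hyperbolic repellers.

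First, I would rewrite the supremum defining $P_J^+$ as a topological pressure. For any ergodic $f$-invariant probability measure $\nu$ supported on $J(f)$, Oseledec's theorem applied to the holomorphic cocycle $Df$ gives
$$L_\nu = \lim_{n\to\infty} \frac{1}{n}\log|\Jac f^n| = \int_{J(f)} \log|\Jac f|\,d\nu,$$
since the sum of the complex Lyapunov exponents equals the Birkhoff average of $\log|\det Df|$. The variational principle then yields $P_J^+(t) = P_{\mathrm{top}}(f|_{J(f)},-t\log|\Jac f|)$. Because $f$ is uniformly expanding on $J(f)$, the potential $\log|\Jac f|$ is H\"older continuous and bounded below by a positive constant. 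The Ruelle--Bowen thermodynamic formalism for uniformly expanding systems implies that $t\mapsto P_J^+(t)$ is real-analytic and strictly decreasing, with $P_J^+(0) = h_{\mathrm{top}}(f) = k\log d>0$ and $P_J^+(t)\to -\infty$ as $t\to\infty$. Hence there is a unique first zero $t_0>0$, and the existence of an equilibrium state $\mu_{t_0}$ for $-t_0\log|\Jac f|$ is guaranteed.

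Second, I would tie $t_0$ to the volume dimension. By hyperbolicity, inverse branches of $f^n$ are defined on balls of uniform size, and the volumetric distortion estimates of \cite{BDM,BD19} give
$$\volume\bigl(B_n(x,r)\bigr) \;\asymp\; r^{2k}\,|\Jac f^n(x)|^{-2}$$
for $x\in J(f)$ and $n\ge 0$, where $B_n(x,r)$ is the Bowen ball of order $n$ and radius $r$. The definition of $\VD_f(J(f))$ is tailored so that Bowen-ball covers weighted by $\volume(\cdot)^{s/(2k)}$ produce a finite Hausdorff-type outer measure precisely when $s > 2\VD_f(J(f))$. Plugging in the volume estimate turns this weighted sum into a transfer-operator-type series for the potential $-t\log|\Jac f|$, and the classical identity $P_{\mathrm{top}}(\varphi)=\lim_n \tfrac{1}{n}\log\sum_{f^n(y)=x} e^{S_n\varphi(y)}$ for H\"older potentials on hyperbolic systems identifies the critical exponent $s=2\VD_f(J(f))$ with the zero $t_0$ of $P_J^+$.

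The principal obstacle is the absence of conformality when $k\ge 2$: Bowen balls are genuinely anisotropic, Koebe distortion is unavailable, and geometric and dynamical dimensions need not coincide, so the one-dimensional arguments of Ma\~n\'e and Manning do not transfer verbatim. The point of the volumetric distortion estimates from \cite{BDM,BD19} is exactly to replace the unavailable diameter-based distortion control by a volume-based one, which is the natural regime for $\VD_f$. Once these estimates are in place, the matching upper and lower bounds follow in a standard way: the upper bound on $\VD_f(J(f))$ is obtained by covering $J(f)$ with a Bowen partition of fixed scale and directly comparing the resulting sum to $\sum_{f^n(y)=x}|\Jac f^n(y)|^{-t}$, while the lower bound uses a Frostman-type mass distribution argument applied to the equilibrium state $\mu_{t_0}$, whose Gibbs property for $-t_0\log|\Jac f|$ controls its mass on Bowen balls from above by $r^{t_0 \cdot 0}\,|\Jac f^n(x)|^{-t_0}$ up to multiplicative constants.
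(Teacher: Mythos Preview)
The paper does not prove this statement at all: Theorem~\ref{thm_bh} is quoted verbatim from the authors' earlier work \cite{BHMane}, with no argument given here. It is stated solely so that it can be invoked in the proof of Theorem~\ref{thm_curve_ana}~(1) and Theorem~\ref{thm_higher}. There is therefore nothing in the present paper to compare your proposal against.

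That said, your outline is a reasonable reconstruction of how such a Bowen-type formula is typically established. The reduction of $P_J^+$ to the topological pressure of $-t\log|\Jac f|$ via the variational principle is correct, as is the analysis of the zero $t_0$ (monotonicity from $\log|\Jac f|>0$ on $J(f)$, $P_J^+(0)=k\log d$). The second half is plausible in spirit but rests on a presumed form of the definition of $\VD_f$ that the present paper never spells out; you are effectively guessing the content of \cite{BHMane}. In particular, the line ``weighted by $\volume(\cdot)^{s/(2k)}$'' and the exponent bookkeeping in the Frostman step (your expression ``$r^{t_0\cdot 0}$'' is garbled) would need to be checked against the actual definition there. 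The genuine content --- replacing Koebe-type diameter distortion by the volume distortion of \cite{BDM,BD19} to handle non-conformality for $k\ge 2$ --- is exactly what \cite{BHMane} supplies, and you are right to flag it as the main obstacle.
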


In particular, our volume dimension is equivalent to half of the Hausdorff dimension when $k=1$.

\medskip

The main result of the section
is the following theorem. We say that $f_1$ and $f_2$ have {\it proportional} marked length spectrum if there exists a constant $c>0$ such that $\widetilde{\lambda}(h_1(\hat{z})) = c\cdot \widetilde{\lambda}(h_2(\hat{z}))$ for any primitive cycle $\hat{z}$ for $f_0$. Here we denote $\widetilde{\lambda}(\hat{z}) = \log |\Jac f_0^n(z)|$ with $\hat{z}\defeq \{z,\ldots, f_0^{n-1}(z)\}$.
\begin{thm}\label{thm_curve_ana}
Let $\Omega$ be a 
hyperbolic component of ${\rm End}(k,d)$.
Take 
$f_1,f_2 \in \Omega$. 
Then the Manhattan curve $\mathcal{C}(f_1,f_2)$ satisfies the following properties:
\begin{enumerate}
\item the points $(2\VD_{f_1}(J(f_1)),0)$ and $(0,2\VD_{f_2} (J(f_2)))$ belong to
$\mathcal{C}(f_1,f_2)$;
\item $\mathcal{C}(f_1,f_2)$ is real-analytic;
\item $\mathcal{C}(f_1,f_2)$ is convex;
\item $\mathcal{C}(f_1,f_2)$ is a straight line if and only if the marked length spectra of $f_1$ and $f_2$ are proportional.
\end{enumerate}
\end{thm}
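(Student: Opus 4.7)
The strategy is to reinterpret the weighted Poincar\'e series as a dynamical sum for the model expanding system $(J_0,f_0)$ and then apply the thermodynamic formalism of topological pressure. For $i=1,2$ define
\begin{equation*}
\phi_i\defeq \log|\Jac f_i|\circ h_i\colon J_0\to\mathbb{R},
\end{equation*}
which is H\"older continuous since $h_i$ is. The chain rule together with the conjugacy $f_i\circ h_i=h_i\circ f_0$ gives $\log|\Jac f_i^n(h_i(y))|=S_n\phi_i(y)$, and therefore $P_{f_1,f_2}^{a,b}(s,x)=\sum_{n\ge 1}\mathcal{L}^n_{-s(a\phi_1+b\phi_2)}\mathbf{1}(x)$, where $\mathcal{L}_\psi$ is the Ruelle transfer operator of $f_0|_{J_0}$ associated to $\psi$. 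Since $f_0$ is uniformly expanding on $J_0$, the Ruelle--Perron--Frobenius theorem yields $\mathcal{L}^n_\psi\mathbf{1}(x)\asymp e^{nP(\psi)}$ for every H\"older $\psi$, so
\begin{equation*}
(a,b)\in\mathcal{C}(f_1,f_2)\;\Longleftrightarrow\;P(-a\phi_1-b\phi_2)=0,
\end{equation*}
where $P$ denotes the topological pressure of $f_0|_{J_0}$. Set $F(a,b)\defeq P(-a\phi_1-b\phi_2)$.

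Items (1)--(3) now follow from standard regularity and convexity properties of $F$. For (1), pushing any $f_0$-invariant measure $\nu$ on $J_0$ forward by $h_i$ preserves entropy and sends $\int\phi_i\,d\nu$ to the sum of Lyapunov exponents of $(h_i)_*\nu$; the variational principle then identifies $F(a,0)$ with $P_J^+(a)$ from Theorem \ref{thm_bh}, whose first zero is $2\VD_{f_1}(J(f_1))$, and symmetrically for $F(0,b)$. For (2), the pressure is real-analytic on H\"older potentials with
\begin{equation*}
\nabla F(a,b)=-\bigl(\textstyle\int\phi_1\,d\mu_{a,b},\,\int\phi_2\,d\mu_{a,b}\bigr),
\end{equation*}
where $\mu_{a,b}$ is the unique equilibrium state of $-a\phi_1-b\phi_2$; uniform expansion forces $\phi_i>0$, so both components of $\nabla F$ are strictly negative and the implicit function theorem produces a real-analytic graph. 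For (3), pressure is convex in the potential and the map $(a,b)\mapsto -a\phi_1-b\phi_2$ is affine, so $F$ is convex; combined with the strict monotonicity, this forces $\mathcal{C}$ to be the graph of a convex, decreasing function.

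Step (4) is the main technical heart. One direction is straightforward: if the marked length spectra are proportional with constant $c>0$, then $S_n(\phi_1-c\phi_2)$ vanishes on every periodic orbit of $f_0$, so Livsic's theorem produces a H\"older $u$ with $\phi_1-c\phi_2=u\circ f_0-u$; invariance of pressure under coboundaries then reduces $F(a,b)$ to a function of $ac+b$ alone, making $\mathcal{C}$ a straight line. For the converse, assume $\mathcal{C}$ is a straight line. Its constant tangent direction has the form $v=(1,-c)$ with $c>0$ by (3), and $\partial_v^2 F\equiv 0$ along $\mathcal{C}$. The Hessian-variance formula for the pressure gives
\begin{equation*}
\partial_v^2 F(a,b)=\sigma^2_{\mu_{a,b}}(\phi_1-c\phi_2)=0,
\end{equation*}
and the variance-zero criterion for H\"older potentials on a mixing expanding repeller yields $\phi_1-c\phi_2=c_0+u\circ f_0-u$ for some constant $c_0$ and H\"older $u$. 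Substituting back gives $F(a,b)=P(-(ac+b)\phi_2)-ac_0$; moving along $\mathcal{C}$ in direction $v$ keeps $ac+b$ fixed while $a$ varies, so $F\equiv 0$ on $\mathcal{C}$ forces $c_0=0$. Thus $\phi_1=c\phi_2+u\circ f_0-u$, and Livsic applied in reverse delivers the proportionality of marked length spectra. The main technical hurdle is the passage from vanishing asymptotic variance to a continuous coboundary; fortunately this classical step applies on any topologically mixing expanding repeller, and so carries over uniformly in the dimension $k$ by working on the model system $(J_0,f_0)$.
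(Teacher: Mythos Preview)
Your proof is correct and tracks the paper's argument through (1)--(3): both reduce $\mathcal{C}(f_1,f_2)$ to the zero set of $F(a,b)=\mathcal{P}(-a\phi_1-b\phi_2)$ via the transfer operator (the paper packages this as Theorem~\ref{thm_Bowen}) and then read off the properties from the thermodynamic formalism. For (3) the paper argues with H\"older's inequality on the Poincar\'e series rather than through convexity of pressure, but these are equivalent. One small imprecision: ``uniform expansion forces $\phi_i>0$'' need not hold pointwise in an arbitrary metric; what you actually need, and what hyperbolicity does guarantee, is $\int\phi_i\,d\mu_{a,b}>0$ for every equilibrium state, which is enough for $\nabla F\neq 0$.

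The genuine divergence is in the converse of (4). You use second-order information: straightness of $\mathcal{C}$ kills the directional Hessian, so the variance formula gives $\sigma^2_{\mu_{a,b}}(\phi_1-c\phi_2)=0$, whence $\phi_1-c\phi_2$ is cohomologous to a constant, which you then eliminate by sliding along the line. The paper uses only first-order information: from the constant slope it extracts $\frac{2\VD_{f_2}}{2\VD_{f_1}}=\frac{\int\tau_1\,dm_2}{\int\tau_2\,dm_2}$ at the endpoint $(0,2\VD_{f_2})$, shows that $m_2$ is therefore also an equilibrium state for $-2\VD_{f_1}\tau_1$, invokes uniqueness to get $m_1=m_2$, and then appeals to the rigidity statement that two H\"older potentials with equal pressure and a common equilibrium state are cohomologous. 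Your route is more self-contained and ties linearity directly to degeneracy of the Hessian; the paper's route avoids the variance machinery but outsources the final step to an external rigidity theorem.
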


We will prove Theorem \ref{thm_curve_ana} in the next section and 
we will discuss some variations in Section
\ref{sec_pressure}.

\subsection{Proof of Theorem \ref{thm_curve_ana}}\label{sec_pf} 
Let $\Omega$ be a hyperbolic component of ${\rm End}(k,d)$ and $f_0$ a fixed element in $\Omega$, as above. 
Define two functions $\tau_i \colon J_0 \to \mathbb{R}$, $i=1,2$
by
$$\tau_i(x) \defeq \log |\Jac f_i(h_i(x))|,$$
where $h_i$ is
the conjugating map between $f_0$ and $f_i$ 
on their Julia sets.
We note that the $\tau_i$'s
are H\"older continuous.

Recall that for a continuous function $\eta \colon J_0 \to \mathbb{R}$, the {\it topological pressure} $\mathcal{P}(\eta)$ is defined as
$$\mathcal{P}(\eta) \defeq \sup_{\mu} \Big(
h_\mu(f_0)+\int_{J_0}\eta\, d\mu\Big),$$
where the supremum is taken over all the $f_0$-invariant probability measures on $J_0$.

Let 
$C^\kappa(J_0)$ be the space of $\kappa$-H\"older continuous functions on $J_0$. Given $\phi \in C^\kappa(J_0)$, define the {\it transfer operator} $\mathcal{L}_\phi \colon C^\kappa(J_0) \to C^\kappa(J_0)$ by
$$\mathcal{L}_\phi\eta(x) \defeq \sum_{f_0(y)=x}e^{\phi(y)}\eta(y).$$
Since $(J_0,f_0)$ is uniformly hyperbolic, the transfer operator is quasi-compact
\cite{LyubichY-introduction}; 
namely, it has an isolated maximal
real eigenvalue $\lambda_\phi$
of multiplicity $1$
and the rest of the spectrum is contained in a disk of radius strictly smaller than $\lambda_\phi$. Moreover, we have $\lambda_\phi = e^{\mathcal{P}(\phi)}$. Standard references are \cite{Parry90, Ruellebook,PU}.

\medskip

The following Bowen-type theorem is a key ingredient in proving that the Manhattan curve is real-analytic.

\begin{thm}\label{thm_Bowen}
For every
$(a,b)\in\mathbb{R}_{\ge 0} \times \mathbb{R}_{\ge 0} \setminus \{(0,0)\}$
we have
$\mathcal{P}(-\delta^{a,b}_{f_1,f_2}(a\tau_1 +b\tau_2))=0$. In particular, the Manhattan curve $\mathcal{C}(f_1,f_2)$ is the set of the solutions to the equation
$\mathcal{P}(-a\tau_1-b\tau_2)=0$.
\end{thm}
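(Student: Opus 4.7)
The plan is to rewrite the weighted Poincaré series as a sum of iterates of the transfer operator applied to the constant function $\mathbf{1}$, and then apply the Ruelle--Perron--Frobenius theory recalled above to identify the critical exponent as the unique zero of a pressure function.

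First, by the chain rule and the conjugacy relation $h_i\circ f_0 = f_i\circ h_i$, I would rewrite $\log|\Jac f_i^n(h_i(y))| = \sum_{k=0}^{n-1}\tau_i(f_0^k(y))$. Writing $S_n\phi \defeq \sum_{k=0}^{n-1}\phi\circ f_0^k$ for the Birkhoff sum and setting
\[
\phi_s \defeq -s(a\tau_1+b\tau_2)\in C^\kappa(J_0),
\]
the formal Poincaré series becomes
\[
P_{f_1,f_2}^{a,b}(s,x) \;=\; \sum_{n=1}^\infty\sum_{f_0^n(y)=x} e^{S_n\phi_s(y)} \;=\; \sum_{n=1}^\infty \mathcal{L}_{\phi_s}^n\mathbf{1}(x).
\]

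Next, I would invoke the quasi-compactness of $\mathcal{L}_{\phi_s}$ on $C^\kappa(J_0)$: the leading eigenvalue $\lambda_{\phi_s}=e^{\mathcal{P}(\phi_s)}$ is simple, the remaining spectrum lies in a disk of strictly smaller radius, and the corresponding eigenfunction $h_s$ is strictly positive. This gives an asymptotic expansion of the form $\mathcal{L}_{\phi_s}^n\mathbf{1}(x)=\lambda_{\phi_s}^n\bigl(h_s(x)\,\nu_s(\mathbf{1})+O(\theta_s^n)\bigr)$ with $\theta_s<1$, uniformly on $J_0$. Because $h_s\,\nu_s(\mathbf{1})$ is bounded away from $0$, the series $\sum_n \mathcal{L}_{\phi_s}^n\mathbf{1}(x)$ converges for every $x\in\bbP^k$ iff $\mathcal{P}(\phi_s)<0$, and diverges at every such $x$ otherwise (in particular also at the boundary value $\mathcal{P}(\phi_s)=0$).

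Finally, because $f_1,f_2$ are uniformly expanding on their Julia sets, both $\tau_1$ and $\tau_2$ are bounded below by a strictly positive constant on $J_0$; hence so is $a\tau_1+b\tau_2$ for any $(a,b)\in\mathbb{R}_{\ge 0}^2\setminus\{(0,0)\}$. Consequently $s\mapsto \mathcal{P}(\phi_s)$ is real-analytic and strictly decreasing in $s$, equals $h_{\mathrm{top}}(f_0)>0$ at $s=0$, and tends to $-\infty$ as $s\to\infty$, so it has a unique zero $s^*>0$. Combining with the previous paragraph, $\delta_{f_1,f_2}^{a,b}=s^*$, giving $\mathcal{P}(-\delta_{f_1,f_2}^{a,b}(a\tau_1+b\tau_2))=0$. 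Setting $s^*=1$ then shows that $\mathcal{C}(f_1,f_2)$ is exactly the zero level set of $(a,b)\mapsto\mathcal{P}(-a\tau_1-b\tau_2)$.

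The main subtlety will be the boundary case $\mathcal{P}(\phi_s)=0$, where divergence of the series depends on the strict positivity of the leading eigenfunction and on $\nu_s(\mathbf{1})>0$; both follow from the standard Ruelle--Perron--Frobenius theorem applied to the Hölder transfer operator of the expanding system $(J_0,f_0)$. Everything else reduces to routine monotonicity and continuity of the topological pressure along the half-line $s\mapsto \phi_s$.
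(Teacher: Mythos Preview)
Your proposal is correct and follows essentially the same approach as the paper: rewrite $P_{f_1,f_2}^{a,b}(s,x)=\sum_{n\ge 1}\mathcal{L}_{\phi_s}^n\mathbf{1}(x)$ and use the Ruelle--Perron--Frobenius asymptotic $\mathcal{L}_{\phi_s}^n\mathbf{1}\sim \lambda_{\phi_s}^n$ to identify the critical exponent with the zero of $s\mapsto\mathcal{P}(\phi_s)$. Your write-up is in fact more complete than the paper's, which leaves the strict monotonicity of $s\mapsto\mathcal{P}(\phi_s)$ (hence the uniqueness of the zero) and the boundary case $\mathcal{P}(\phi_s)=0$ implicit; your argument via strict positivity of $a\tau_1+b\tau_2$ and of the leading eigenfunction makes these points explicit.
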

\begin{proof}
For $s \ge 0$, write $\phi_s \defeq -s(a\tau_1+b\tau_2)$.
Then,
by the definition of
the transfer operator,
we have
\begin{equation}\label{eq_1}
P_{f_1,f_2}^{a,b}(s,x) = \sum_{n\ge 1} \mathcal{L}^n_{\phi_s}\mathds{1}(x),
\end{equation}
where $\mathds{1}$ denotes the function constantly equal to $1$.
We denote by $\lambda_s=e^{\mathcal{P}(\phi_s)}$ the maximal
eigenvalue of $\mathcal{L}_{\phi_s}$.

By definition of critical exponent and \eqref{eq_1}, 
$\sum_{n\ge 1} \mathcal{L}^n_{\phi_s}\mathds{1}$ 
is divergent for all $x \in \mathbb{P}^k$ if $s < \delta_{f_1,f_2}^{a,b}$. Since,
for large $n$,  we have
$\mathcal{L}^n_{\phi_s}\mathds{1}(x) \sim \lambda_s^n(x)$ for any $x\in \mathbb{P}^k$, we 
deduce the equality
$\lambda_{\delta_{f_1,f_2}^{a,b}} = 1$. 
It follows that $\mathcal{P}(\phi_{\delta_{f_1,f_2}^{a,b}}) = 0$; namely, $\mathcal{P}(-\delta_{f_1,f_2}^{a,b}(a\tau_1+b\tau_2)) = 0$, which gives the first assertion. 

Since $\mathcal{C}(f_1,f_2)$ is by definition the collection of $(a,b)$ such that $\delta_{f_1,f_2}^{a,b}=1$, the second assertion follows.
This completes the proof.
\end{proof}

Now we give a proof of Theorem \ref{thm_curve_ana}.

\begin{proof}[Proof of Theorem \ref{thm_curve_ana}]
We prove statement (1).
If $a=0$, by Theorem \ref{thm_Bowen}, we have $\mathcal{P}(-b\tau_2) = 0$.
This implies 
the equality
$b = 2\VD_{f_2}(J(f_2))$ by Theroem \ref{thm_bh}.
Similarly, 
the point $(2\VD_{f_1}(J(f_1)),0)$ is also
on $\mathcal{C}(f_1,f_2)$. 

\medskip

Since $f_0 \colon J_0 \to J_0$ is uniformly hyperbolic, the pressure function $\mathcal{P}$ is analytic on the space of H\"older continuous functions on $J_0$
\cite{Parry90, Ruellebook,PU}.
Therefore, statement (2)
follows from
Theorem \ref{thm_Bowen}
and 
the inverse function theorem.

\medskip

Statement (3) follows from the fact that, for all $x$, the set
$$S_x
\defeq \{(a,b) \in \mathbb{R}_{\ge 0} \times \mathbb{R}_{\ge 0} \setminus \{(0,0)\} : P_{f_1,f_2}^{a,b}(1,x) < \infty\}$$ is convex. The convexity of $S_x$
follows from 
H\"older's inequality, i.e.,
$$P^{ta_1+(1-t)b_1,ta_2+(1-t)b_2}_{f_1,f_2}(1,x) \le \left(P^{a_1,b_1}_{f_1,f_2}(1,x)\right)^{t} \cdot \left(P^{a_2,b_2}_{f_1,f_2}(1,x)\right)^{1-t}.$$

\medskip

To prove statement (4), if $f_1$ and $f_2$ have
proportional marked length spectrum, then it is straightforward to see that $\mathcal{C}(f_1,f_2)$ is a straight line. 
For the converse, suppose $\mathcal{C}(f_1,f_2)$ is a straight line. 
Since $(2{\rm VD}_{f_1}(J(f_1)),0)$ and $(0,2{\rm VD}_{f_2}(J(f_2)))$ 
belong to $\mathcal{C}(f_1,f_2)$,
its equation is given by
$$b(a)
= 2{\rm VD}_{f_2}(J(f_2))) - \frac{2{\rm VD}_{f_2}(J(f_2)))}{2{\rm VD}_{f_1}(J(f_1)))}a.$$
Since $\frac{d\mathcal{P}(-a\tau_1-b(a) \tau_2)}{da} = 0$, spelling out this derivative and plugging in the above expression for $b(a)$ 
gives
\begin{equation}\label{eq_claim0}
\frac{2{\rm VD}_{f_2}(J(f_2))}{2{\rm VD}_{f_1}(J(f_1))} = \frac{\int_{J_0} \tau_1 dm_2}{\int_{J_0} \tau_2 dm_2},
\end{equation}
where $m_i$ is the equilibrium state of $-2{\rm VD}_{f_i}(J(f_i))\tau_i
=-2{\rm VD}_{f_i}(J(f_i))\log|\Jac f_i\circ h_i|\colon J_0 \to \mathbb{R}$.

\medskip

{\bf
Claim.} The functions
$-2{\rm VD}_{f_1}(J(f_1))
\tau_1$
and $-2{\rm VD}_{f_2}(J(f_2))
\tau_2$ 
are cohomologous.

\smallskip

\begin{proof}[Proof of the claim.]
We first note that,
since $\mathcal{P}(-2{\rm VD}_{f_2}(J(f_2))\cdot \tau_2) = 0$, we
have
\begin{equation} \label{eq_claim1}
h(m_2)=2{\rm VD}_{f_2}(J(f_2))\int \tau_2 dm_2.
\end{equation}
We now
show that $m_2$ is also an equilibrium state for $-2{\rm VD}_{f_1}(J(f_1))\tau_1$. 
To this end, we compute
\begin{align*}
h(m_2)-2{\rm VD}_{f_1}(J(f_1))\int \tau_1 dm_2
&= 2{\rm VD}_{f_2}(J(f_2))\int \tau_2 dm_2 -2{\rm VD}_{f_1}(J(f_1))\int \tau_1 dm_2 \text{ by } \eqref{eq_claim1}\\
&=0 \text{ by } \eqref{eq_claim0}\\
& = \mathcal{P}(-
2{\rm VD}_{f_1}(J(f_1))
\tau_1).
\end{align*}
Since the equilibrium state of $-
2{\rm VD}_{f_1}(J(f_1))
\tau_1$ is unique, we have $m_1=m_2$. This implies that $-
2{\rm VD}_{f_1}(J(f_1))\tau_1$ and 
$-
2{\rm VD}_{f_2}(J(f_2))
\tau_2$ are cohomologous, see for instance
\cite[Theorem 4.8]{Sarig09}. 
\end{proof}

The Claim implies that $f_1$ and $f_2$ have
proportional marked length spectra, and completes the proof.
\end{proof}

\subsection{Generalizations and variations of Theorem \ref{thm_curve_ana}, and relation with the pressure metrics} \label{sec_pressure}
While stated 
for hyperbolic maps for simplicity,
Theorem \ref{thm_curve_ana} actually holds in larger generality, at least in dimension 1.
Indeed, recall that 
McMullen \cite{McMullen00} defined and studied the Poincar\'e series $P_f(s,x)$ for every 
{\it geometrically finite} rational map. Similarly, we can define and study Manhattan curves for a pair of rational maps that are not necessarily hyperbolic. 

As a first example, one can consider 
Misiurewicz maps, or more precisely, a stable 
component in a family of rational maps given by a Misiurewicz relation where all critical points are either preperiodic to a repelling cycles or in the basin of an attracting cycle.
In this case, the analyticity statement 
in Theorem \ref{thm_curve_ana} essentially follows from our
study
in \cite{BH25} (which is based on the spectral study in \cite{MakSmir03},
see in particular \cite[Theorem 3.1 and Remark 3.3]{BH25}). 

One can also consider similar components
inside parabolic families, i.e., families with some persistent parabolic cycle.
In this case,
a variation of the Manhattan curve (where 
the condition $\delta^{a,b}_{f_1, f_2}=1$
is replaced by $\delta^{a,b}_{f_1, f_2}=e^\eta$ for some $0<\eta<\log d$)
can be studied with the techniques as in \cite{BH24}, and in particular with the machinery of \cite{BD23eq1,BD24eq2}, which gives a suitable Banach space for the spectral study of the transfer operators.

As mentioned in the Introduction, Manhattan curves are
also related to pressure metrics
\cite{McMullen08,Ivrii14,HeNie23,BH24,BH25,HLP23,HLP25}.
In the context of hyperbolic surfaces (possibly with cusps), such a relation was obtained by Pollicott-Sharp \cite[Section 4]{PS16} and Kao \cite[Theorem D]{Kao20}. 
Using similar arguments, we can
obtain analogous results in holomorphic dynamics - if $\{f_t\}$ is a smooth path in a stable component, then the family of Manhattan curves $\mathcal{C}(f_0,f_t)$ encodes the information of the 
pressure metric $\|\frac{d}{dt}|_{t=0}f_t\|_p$. More precisely, if $(a,b_t(a))$ is a parametrization of
the Manhattan curve $\mathcal{C}(f_0,f_t)$, then we have $$\frac{d^2}{dt^2}\bigg|_{t=0}b_t(a) = a(a-2\VD_{f_1}(J(f_1))) \cdot \Big\|\frac{d}{dt}\big|_{t=0}f_t\Big\|_p^2$$
for every $a \in (0,2\VD_{f_1}(J(f_1)))$.

\section{Counting problems for the multiplier spectrum}
In this section, we discuss counting problems
for the multiplier spectrum
in complex dynamics. As in the case
of the length spectrum
of hyperbolic surfaces, we will see that,
for two holomorphic endomorphisms $f_1,f_2$ in the same hyperbolic component of ${\rm End} (k,d)$, 
the Manhattan curve $\mathcal{C}(f_1,f_2)$ is related to the correlation number of their 
multiplier spectra.

\subsection{Asymptotic distribution of multipliers}
For $f \in {\rm End}(d,k)$, we denote by $\mathcal{P}(f)$ the set of primitive periodic orbits $\hat{z} \defeq \{z,\ldots,f^{n-1}(z)\}$ of $f$ on the Julia set $J(f)$. For each $\hat{z} \in \mathcal{P}(f)$, denote by 
$$\lambda(\hat{z}) \defeq  \Jac f^n(z)$$
the {\it multiplier} of the periodic orbit $\hat{z}$.
For every $T\geq 0$, 
consider the quantity
$$N_T(f) \defeq {\rm Card}\{\hat{z} \in \mathcal{P}(f) : |\lambda(\hat{z})| \le T\}.$$
As 
$f$ is hyperbolic, the set $N_T(f)$ is finite for each $T > 1$.

As mentioned in the Introduction, for hyperbolic surfaces $\mathbb{H}^2/\rho_1(\pi_1S)$, Pollicott-Sharp \cite{PS98} studied the asymptotics for the cardinality ${\rm Card} \{[\gamma] : l_1([\gamma]) \le T \}$ of conjugacy classes $[\gamma]$ in $\pi_1S$ such that the primitive geodesic in the class $[\gamma]$ has hyperbolic length smaller than or equal to $T$. 
The following theorem gives the main term in the asymptotics 
of the analogous function
$N_T(f)$.
In particular, this shows that the asymptotic growth rate of the modulus of multipliers is again
related to
the volume dimension
of the Julia set. Recall that, when $k=1$, the volume dimension is equal to half 
of the Hausdorff dimension.

\begin{thm}\label{thm_higher}
Let $f \colon \mathbb{P}^k \to \mathbb{P}^k$ be a hyperbolic holomorphic endomorphism of algebraic degree $d \ge 2$ such that $\log |\Jac f|$ is not cohomologous to a constant. Then we have
\[N_T(f) \sim Li(T^{\rm 2VD})
\quad \mbox{ where }
\quad
Li(x) = \int_0^t \frac{dt}{\ln t}\] 
and ${\rm VD}$ is the volume dimension of the Julia set of $f$.
\end{thm}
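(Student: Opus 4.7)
The plan is to realize $N_T(f)$ as the count of primitive closed orbits of a suspension flow over $(J(f),f)$ and invoke the Parry--Pollicott prime orbit theorem. Set $\phi := \log|\Jac f|\colon J(f)\to\mathbb{R}$. By the chain rule, for a primitive periodic orbit $\hat{z}=\{z,\ldots,f^{n-1}(z)\}$ we have $\log|\lambda(\hat{z})| = \log|\Jac f^n(z)| = S_n\phi(z)$, where $S_n\phi$ is the Birkhoff sum. Hence
\[
N_T(f) \;=\; \#\{\hat{z}\in\mathcal{P}(f) : S_n\phi(z) \le \log T\}.
\]

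Since $f$ is hyperbolic, $(J(f),f)$ admits a Markov coding by a mixing subshift of finite type on which $\phi$ lifts to a H\"older continuous potential, so the full thermodynamic formalism is available. Uniform expansion on $J(f)$ gives that $S_n\phi$ grows linearly with a positive rate, so after adding a H\"older coboundary (which does not change Birkhoff sums on periodic orbits) I may assume $\phi$ is strictly positive. Then $\phi$ serves as the roof function of a suspension flow $\Phi_t$ over $(J(f),f)$ whose primitive closed orbits are in canonical bijection with $\mathcal{P}(f)$, the orbit corresponding to $\hat{z}$ having period exactly $S_n\phi(z)$. Thus $N_T(f)$ equals the number of primitive closed orbits of $\Phi_t$ of period at most $\log T$. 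The topological entropy $h$ of $\Phi_t$ is the unique $h>0$ with $\mathcal{P}(-h\phi)=0$, where $\mathcal{P}$ denotes the topological pressure for $(J(f),f)$. Since $\mathcal{P}(-t\phi)$ is precisely the function $P_J^+(t)=\sup_\nu(h_\nu(f)-tL_\nu)$ appearing in Theorem \ref{thm_bh}, I conclude
\[
h \;=\; 2\VD_f(J(f)).
\]

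To finish, I would apply the Parry--Pollicott prime orbit theorem: for a weak-mixing hyperbolic flow of topological entropy $h$, the number of primitive closed orbits of period at most $\tau$ is asymptotic to $\mathrm{Li}(e^{h\tau})$ as $\tau\to\infty$ (compare \cite{PS98} and the references therein). Taking $\tau=\log T$ yields
\[
N_T(f) \;\sim\; \mathrm{Li}(e^{h\log T}) \;=\; \mathrm{Li}(T^{2\VD}),
\]
as claimed. The main obstacle is verifying the weak-mixing hypothesis, which is equivalent to requiring that $\phi$ be not H\"older-cohomologous to any function with values in a discrete subgroup $c\mathbb{Z}\subset\mathbb{R}$. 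This is precisely the purpose of the hypothesis that $\log|\Jac f|$ is not cohomologous to a constant: one shows, via a Liv\v{s}ic-type argument combined with the abundance of periodic orbits for the expanding map $f$, that any would-be lattice cohomology of $\phi$ can be promoted to cohomology with a constant, which is excluded by hypothesis. Modulo this standard technical input, the Parry--Pollicott asymptotic applies and gives the theorem.
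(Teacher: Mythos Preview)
Your approach is correct and closely parallels the paper's, differing mainly in packaging. You realize $N_T(f)$ as a prime-orbit count for the suspension flow over $(J(f),f)$ with roof $\phi=\log|\Jac f|$ and invoke the Parry--Pollicott prime orbit theorem as a black box; the paper instead works directly with the Ruelle zeta function
\[
\eta(s)=\exp\Big(\sum_{n\ge1}\tfrac{1}{n}\sum_{f^n(z)=z}e^{-s\log|\lambda(\hat z)|}\Big),
\]
shows it has a simple pole at $s=2\VD$ and no other singularities on the line ${\rm Re}(s)=2\VD$, and then applies Ikehara's Tauberian theorem. Since the prime orbit theorem is itself proved via exactly this zeta/Tauberian machinery, the two routes are the same underneath. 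In both, the exponent is identified with $2\VD$ through the equation $\mathcal{P}(-h\phi)=0$ and Theorem~\ref{thm_bh}. Your route has the advantage of brevity; the paper's has the advantage of making the analytic input (location of poles of $\eta$) explicit.

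One caveat on your weak-mixing step. Your sketch (``a Liv\v{s}ic-type argument combined with the abundance of periodic orbits'' promoting any lattice cohomology of $\phi$ to cohomology with a constant) does not work for general H\"older potentials over subshifts: a potential can take values in $c\mathbb{Z}$ without being cohomologous to any constant (e.g.\ $\phi(x)=1+x_0$ on the full $2$-shift). The paper faces the identical issue and resolves it by citing a specific spectral lemma (\cite[Lemma~5.14]{BD24eq2}) to obtain $P(-(2\VD+ib)\log|\Jac f|)<0$ for all $b\neq 0$; whatever that argument is, it requires more than Liv\v{s}ic alone, so you should either cite it precisely or supply it, rather than describe it heuristically.
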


\begin{rmk}
In dimension 1, if $f$ is a hyperbolic rational map which is not conjugate to $z^{\pm d}$, Oh-Winter \cite{Oh17} proved that there exists $\epsilon>0$ such that
$N_T(f) = Li(T^{\HD (J)})+O(T^{\HD(J)-\epsilon})$. Then
 it is a natural question
whether one can find a similar power saving error term in Theorem \ref{thm_higher} when $k\ge2$, as well as
generalize such results 
without the uniform hyperbolicity assumption, see for instance 
\cite{LRL}. 
A power saving error term in Theorem \ref{thm_higher} would allow one to also
study the {\it shrinking interval} problem for the asymptotic distribution of arguments of the multipliers; see \cite{HeNie22} for the case $k=1$.
\end{rmk}

\begin{proof}[Proof of Theorem \ref{thm_higher}]
For every $s \in \mathbb C$,
let us consider the (formal)
\emph{Ruelle zeta function}
$$\eta(s) \defeq \exp \left(\sum_{n=1}^\infty \frac{1}{n} \sum_{f^n(z) = z} e^{-s\cdot |\lambda(\hat{z})|} \right).$$
We are going to show that $\eta(s)$ satisfies the following three properties:
\begin{enumerate}
\item it converges for ${\rm Re}(s) > 2\VD$;
\item it has a simple pole at $s = 2\VD$; and 
\item it is analytic on ${\rm Re}(s) \ge 2\VD$ except at $s = 2\VD$
(meaning that every point in this set has a neighborhood where $\eta(s)$ is analytic).
\end{enumerate}
Once the
properties above are established, the conclusion follows from the Ikehara’s Tauberian theorem (see for instance \cite{Lang94}) as $\eta(s) = \int_0^\infty e^{-st}dN_t$. See \cite{PS97Circle} for a similar application.

\medskip

Since $f$ is hyperbolic,
Ruelle's work \cite{Ruelle90} 
relates the zeta function to the transfer operator $\mathcal{L}_s$ for the potential $-s\log |\Jac f|$ (see also \cite[Section 6]{Oh17}). Hence, we know that $\eta(s)$ converges and is analytic when the spectral radius $e^{P(-s\log|\Jac f|)}$ of $\mathcal{L}_s$ is strictly smaller than 1, where $t\mapsto P(-t\log|\Jac f|)$ is the pressure function for the potential $-t\log |\Jac f|$. 
In particular, if ${\rm Re}(s) > p(f)$, where $p(f)$ is the unique zero of $P(-s\log|\Jac f|)$, $\eta(s)$ converges and is analytic. Moreover, by
Theorem \ref{thm_bh},
we know that $p(f) = 2\VD$.
This shows the property (1).

\medskip

Write $s = a +bi$. We show that $\eta(s)$ has a pole at $s = 2\VD + bi$ if and only if $b =0$. By \cite[Proposition 3]{PS97Circle},
$s = 2\VD + bi$ is a pole if and only if $P(-(2\VD+bi)\log |\Jac f|) = 0$, if and only if
\begin{equation} \label{eq_2.21}
-b\log |\Jac f| = u \circ f - u +\psi
\end{equation}
for some continuous functions $u \colon J(f) \to \mathbb R$ and $\psi \colon J(f) \to 2\pi \mathbb{Z}$. 
Suppose $s = 2\VD + bi, b\neq 0$ is a pole.
Since by assumption 
$\log |\Jac f|$ is not cohomologous to a constant,
a simple argument 
(see for instance \cite[Lemma 5.14]{BD24eq2})
shows that we have
\[P(-(2\VD+bi)\log |\Jac f|) < 0,\]
as the eigenvalue $e^{P(-(2\VD+bi)\log |\Jac f|)}$ of the transfer operator is strictly smaller than 1.
This shows that $s = 2\VD + bi$ cannot be a pole and therefore is a contradiction. Hence we must have $b=0$. By analyticity, the same argument also shows (3).
\medskip

To see that the pole $s = 2\VD$ is simple, we compute
$$\lim_{s \to 2\VD} \frac{s-2\VD}{1-e^{P(-s\log|\Jac f|)}} = \frac{-1}{P'(-2\VD \cdot \log |\Jac f|)},$$
which is strictly
positive
as the pressure function $t\mapsto P(-t\log |\Jac f|)$ is strictly decreasing.
This completes the proof of
(2), and concludes the proof.
\end{proof}

\subsection{Correlation of multiplier spectra}
As mentioned in the  Introduction, in the context of hyperbolic geometry, the Manhattan curve $\mathcal{C}(\rho_1,\rho_2)$ is related to the correlation of the length
spectra of
the hyperbolic surfaces $\mathbb{H}^2/\rho_1(\pi_1S)$ and $\mathbb{H}^2/\rho_2(\pi_1S)$. 
Analogously, if $f_1$ and $f_2$ are two holomorphic endomorphisms of $\mathbb{P}^k$ such that $(J(f_1),f_1)$ and $(J(f_2),f_2)$  are H\"older conjugate by $\phi$, 
for every $T,\epsilon>0$
we 
can consider the
quantity
$$M_{T,\epsilon}(f_1,f_2) \defeq {\rm Card}\{\hat{z} \in \mathcal{P}(f_1): \widetilde{\lambda}_1(\hat{z}),\widetilde{\lambda}_2(\hat{z}) \in (T,T+\epsilon) \}$$
where $\hat{z} = \{z,\ldots, f^{n-1}(z) \}$ is a primitive periodic orbit of $f_1$
and we set
$\widetilde{\lambda}_1(\hat{z})=\log|\Jac f_1^n(z)|$ and $\widetilde{\lambda}_2(\hat{z})=\log|\Jac f_2^n(\phi(z))|$.
Observe that $M_{T,\epsilon}(f,f)$ is essentially
a  ``discrete derivative'' of $N_T(f)$, up to taking logarithm.

\begin{thm}\label{thm_correlation_num}
Let $\Omega \subset  {\rm End}(d,k)$ be a hyperbolic component and 
take $f_1,f_2 \in \Omega$.
Let $\phi$ be the conjugating map
between $f_1$ and $f_2$
and suppose that
\begin{enumerate} [label=(\alph*)]
\item $b_1\log |\Jac f_1|+b_2\log |\Jac f_2  \circ \phi|$
is not cohomologous to a constant for any $(b_1,b_2) \in \mathbb{R}^2 \setminus \{(0,0)\}$;
\item there exist two periodic orbits $\hat z$ and $\hat w$
for $f_1$
with
$\tilde \lambda_1  (\hat z) > 
\tilde \lambda_2  (\hat z)$ and
$\tilde \lambda_2  (\hat w) > 
\tilde \lambda_1  (\hat w)$.
\end{enumerate}
Then the following properties hold.
\begin{enumerate}
\item For every
$\epsilon>0$
there exist two constants
$C = C(\epsilon,f_1,f_2)>0$ 
and $\alpha = \alpha(f_1,f_2) \in (0,k\log d)$
(independent of $\epsilon$)
such that
$$
M_{T,\epsilon} (f_1, f_2)
\sim C \frac{e^{\alpha T}}{T^{3/2}}
\quad \text{ as }
\quad T \to \infty.$$
\item The exponent $\alpha$ in (1) equals $a_0+b_0$, where $(a_0,b_0)$
 is the unique point on 
$\in \mathcal{C}(f_1,f_2)$
such that the slope of the tangent line
to $\in \mathcal{C}(f_1,f_2)$
at $(a_0,b_0)$ is equal to $-1$. 
\end{enumerate}
\end{thm}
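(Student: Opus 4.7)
The plan is to adapt the Schwartz--Sharp \cite{SchwartzSharp93} / Sharp \cite{Sharp98} strategy, with the uniformly expanding system $(J_0,f_0)$ replacing the geodesic flow. I would pull everything back to $J_0$ via the H\"older conjugacies and work throughout with the two H\"older potentials $\tau_i \defeq \log |\Jac f_i \circ h_i|$ already introduced in the proof of Theorem \ref{thm_Bowen}. The analytic backbone is the quasi-compactness of the transfer operators $\mathcal{L}_{-a\tau_1-b\tau_2}$ on H\"older space used in the proof of Theorem \ref{thm_curve_ana}, together with a Nagaev--Guivarc'h perturbation analysis of Fourier-tilted transfer operators and the resulting two-dimensional local central limit theorem.

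The first step is to identify the saddle point $(a_0,b_0)$ of part (2), which by the same argument will force the value of $\alpha$ in part (1). By Theorem \ref{thm_Bowen}, $\mathcal{C}(f_1,f_2)$ is the zero locus of $\mathcal{P}(-a\tau_1-b\tau_2)$. The derivative formula $\partial_a\mathcal{P}(-a\tau_1-b\tau_2) = -\int \tau_1\,dm_{a,b}$, with $m_{a,b}$ the equilibrium state of $-a\tau_1-b\tau_2$, yields by implicit differentiation
$$
\frac{db}{da}(a,b) = -\,\frac{\int \tau_1\,dm_{a,b}}{\int \tau_2\,dm_{a,b}},
$$
so that slope $=-1$ is equivalent to $\int \tau_1\,dm_{a_0,b_0} = \int \tau_2\,dm_{a_0,b_0}$. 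Assumption (a), via Theorem \ref{thm_curve_ana}(4), forces $\mathcal{C}(f_1,f_2)$ to be strictly convex, so the slope is strictly monotone along the curve; assumption (b), transferred to equilibrium states by equidistribution of periodic orbits, produces equilibrium measures for which $\int(\tau_1-\tau_2)\,dm$ takes values of both signs, so the slope takes values both $<-1$ and $>-1$ along the curve. This yields a unique interior point $(a_0,b_0)$ with slope $-1$.

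Next I would run the asymptotic analysis at the saddle. Put $\phi_0 \defeq -a_0\tau_1-b_0\tau_2$, so $\mathcal{P}(\phi_0)=0$, let $m_0$ be its equilibrium state, and $\bar\tau \defeq \int\tau_i\,dm_0$ (independent of $i$ by construction). Assumption (a) provides the two inputs needed from the Fourier-tilted family $\mathcal{L}_{\phi_0-i(t_1\tau_1+t_2\tau_2)}$: a strictly positive-definite asymptotic covariance matrix $\Sigma$ of $(\tau_1,\tau_2)$ under $m_0$, and spectral radius strictly less than $1$ uniformly on compact sets of $(t_1,t_2)\neq(0,0)$. The Nagaev--Guivarc'h perturbation theorem then yields a two-dimensional local CLT for $(S_n\tau_1,S_n\tau_2)$ under $m_0$. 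Writing
$$
M_{T,\epsilon}(f_1,f_2) = \sum_{n\ge 1}\#\bigl\{\hat z\in\mathcal{P}(f_0):n(\hat z)=n,\ S_n\tau_i(z)\in(T,T+\epsilon),\ i=1,2\bigr\},
$$
and inserting the tilt $e^{S_n\phi_0}$ --- which equals $e^{-(a_0+b_0)T}(1+o(1))$ on the constraint event --- reduces each inner count to an $m_0$-weighted periodic orbit sum controlled by the local CLT and the Ruelle--Perron--Frobenius theorem applied to $\mathcal{L}_{\phi_0}$. The main contribution comes from $n\approx T/\bar\tau$: the two-dimensional CLT restricted to the one-dimensional constraint $S_n(\tau_1-\tau_2)\approx 0$ (nondegenerate by assumption (a)) contributes $T^{-1/2}$, and the summation over $n$ contributes the standard $T^{-1}$ of the prime orbit theorem, giving the $T^{-3/2}$ factor in (1) with $\alpha=a_0+b_0$, which simultaneously proves (2). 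The bounds $0<\alpha<k\log d$ follow from $M_{T,\epsilon}\le\min_i N_T(f_i)$ and Theorem \ref{thm_higher} for the upper bound, and from assumption (b) for the lower bound.

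The hardest part will be controlling the Fourier-tilted operators $\mathcal{L}_{\phi_0-i(t_1\tau_1+t_2\tau_2)}$ uniformly in $(t_1,t_2)$ away from the origin: assumption (a) is only a non-cohomological hypothesis, which rules out spectral radius equal to $1$ at each fixed nonzero $(t_1,t_2)$ but not uniformly at infinity. In the uniformly hyperbolic regime of the theorem this is absorbed by quasi-compactness plus a continuity-and-compactness argument on compact sets, which suffices for the leading-order asymptotic; obtaining a power-saving error term in the spirit of \cite{Oh17} would require genuine Dolgopyat-type oscillatory estimates for the transfer operator, which I would not attempt here.
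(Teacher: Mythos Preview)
Your approach is correct and follows the same Schwartz--Sharp/Sharp strategy as the paper, but with one structural difference worth noting. The paper does \emph{not} run the Nagaev--Guivarc'h/2D local CLT argument directly on the discrete system $(J_0,f_0)$. Instead, it passes to the suspension flow over the natural extension of $(J(f_1),f_1)$ with roof function $r=\log|\Jac f_1|$, builds a H\"older observable $\psi$ on the flow space with $\int_\ell\psi=\widetilde\lambda_2(\hat z)$ along closed orbits, and then quotes Lalley's theorem \cite{Lalley87} for symbolic flows as a black box to obtain part (1). The independence hypothesis required by Lalley is checked in a separate lemma showing that $a_1\widetilde\lambda_1(\hat z)+a_2\widetilde\lambda_2(\hat z)\in\mathbb Z$ for all $\hat z$ forces $a_1=a_2=0$; this is exactly the non-lattice input you extract from assumption (a) at the level of Fourier-tilted transfer operators. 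For part (2) the paper works with the flow pressure $P(-a-b\psi)$, whose zero set coincides with $\mathcal C(f_1,f_2)$ via the Abramov-type relation to the discrete pressure $\mathcal P(-a\tau_1-b\tau_2)$, and then identifies $\alpha$ with the flow entropy of the equilibrium state at the slope $-1$ point, which unwinds to $a_0+b_0$ just as in your computation.

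So your route is more self-contained (you effectively re-prove what Lalley's theorem encapsulates, tailored to this setting), while the paper's is shorter by outsourcing the local limit analysis to \cite{Lalley87}. Your decomposition of the $T^{-3/2}$ into a $T^{-1}$ prime-orbit factor and a $T^{-1/2}$ factor from the codimension-one constraint $S_n(\tau_1-\tau_2)\approx 0$ is the transparent heuristic behind Lalley's result and is correct; the paper does not spell this out. One small caveat: your argument for $\alpha<k\log d$ via $M_{T,\epsilon}\le N_{e^{T+\epsilon}}(f_i)$ and Theorem \ref{thm_higher} only gives $\alpha\le 2\VD_{f_i}(J(f_i))$, which is not the same bound; the paper asserts $0<h<k\log d$ for the relevant flow entropy without a detailed justification either, so you are not missing anything the paper actually proves.
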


We will prove the two assertions in Theorem 
\ref{thm_correlation_num} by adapting
the arguments
in \cite[Section 3]{SchwartzSharp93} and 
\cite[Section 3]{Sharp98}, respectively.
In particular, we will use a result of Lalley \cite{Lalley87} concerning the distribution of periodic orbits of a symbolic flow. In our case, we consider the following suspension flow for $f_1$. 
Consider first the natural extension
$(S_{f_1},\hat{f_1})$ of $(J(f_1), f_1)$,
where 
$$S_{f_1} \defeq \{(\ldots, x_{-2}, x_{-1},x_0):x_0 \in J(f_1)\}$$ and
$\hat{f_1}(\ldots, x_{-2}, x_{-1},x_0) \defeq
(\ldots, x_{-2}, x_{-1},x_0, f_1(x_0))$.
We then consider the flow associated to $(S_{f_1},\hat{f_1})$ and the 
roof function $r = \log |\Jac f_1|$. Namely, we consider the space
\[
S_{f_1}^{(r)} \defeq \{
(\hat{x}, s)\colon
\hat{x} \in S_{f_1}, 0\leq s\leq r(x_0) \}/(\hat x, r(x_0))\sim (\hat f_1(\hat x), 0).
\]
and the natural semigroup $\psi_t$ 
on
$S_{f_1}^{(r)}$ given by
by $\psi_t (\hat x, s)= (\hat x, s+t)$. Observe that each $n$-periodic orbit $(x_0, \dots, x_{n-1})$
for $f_1$ corresponds to a closed curve for this flow, of length
$r(x_0)+ \dots + r(f^{n-1}(x_0)) = \log |f_1^n (x_0)|$.

Lalley's result concerns H\"older continuous functions on the symbolic flow which are {\it independent}. In our case, this condition
will be satisfied 
thanks to the following lemma.

\medskip

\begin{lem} \label{lem_sharp}
Let $f_1$ and $f_2$ be as in Theorem \ref{thm_correlation_num}.
If there exist $a_1,a_2 \in \mathbb{R}$ such that 
\begin{equation}\label{eq:lem_sharp}
a_1\widetilde{\lambda}_1(\hat{z})+a_2\widetilde{\lambda}_2(\hat{z}) \in \mathbb{Z}
\quad \mbox{ for every }
\hat{z} \in \mathcal{P}(f_1),
\end{equation}
then $a_1=a_2=0$.
\end{lem}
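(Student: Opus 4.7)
The plan is to turn the integrality hypothesis into a cohomological statement via Livsic's theorem, and then derive a contradiction with assumption (a) of Theorem \ref{thm_correlation_num} using the nondegeneracy provided by (b).

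Set $\psi \defeq a_1\log|\Jac f_1|+a_2\log|\Jac f_2\circ\phi|$, a H\"older continuous function on $J(f_1)$. The hypothesis says that $S_n\psi(z) = a_1\widetilde{\lambda}_1(\hat{z})+a_2\widetilde{\lambda}_2(\hat{z})\in\mathbb{Z}$ at every primitive $n$-periodic point $z$; by iterating a primitive orbit, the same holds at every periodic point. Equivalently, the reduction $\bar\psi\colon J(f_1)\to\mathbb{R}/\mathbb{Z}$ has vanishing Birkhoff sums at every periodic orbit. Applying Livsic's cohomology theorem for H\"older cocycles with values in the compact group $\mathbb{R}/\mathbb{Z}$ on the topologically mixing uniformly hyperbolic system $(J(f_1),f_1)$ yields a continuous $\bar g\colon J(f_1)\to\mathbb{R}/\mathbb{Z}$ with $\bar\psi = \bar g\circ f_1-\bar g$. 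Lifting to $\mathbb{R}$ produces a continuous $g\colon J(f_1)\to\mathbb{R}$ and a continuous, hence locally constant, $N\colon J(f_1)\to\mathbb{Z}$ such that $\psi = g\circ f_1-g+N$.

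The next step is to show that $N$ is constant. If $J(f_1)$ is connected this is immediate. In general, suppose $N$ takes distinct integer values on two clopen pieces of $J(f_1)$. Using the specification property of the expanding system $(J(f_1),f_1)$, together with the linear independence of the Lyapunov vectors $(\widetilde{\lambda}_1(\hat{z}),\widetilde{\lambda}_2(\hat{z}))$ and $(\widetilde{\lambda}_1(\hat{w}),\widetilde{\lambda}_2(\hat{w}))$ guaranteed by the strict inequalities in (b), one constructs families of periodic orbits shadowing prescribed concatenations of $\hat{z}$ and $\hat{w}$ that visit any chosen clopen piece. The integrality of $S_n\psi=S_n N$ along such orbits, together with the cancellation of the $\hat{z}$- and $\hat{w}$-contributions in suitable differences, provides enough constraints on the integer values of $N$ on different clopen pieces to force them to coincide. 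Hence $N$ is a constant integer $c$.

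Once $N\equiv c$, the equation $\psi-c=g\circ f_1-g$ shows that $\psi=a_1\log|\Jac f_1|+a_2\log|\Jac f_2\circ\phi|$ is cohomologous to the constant $c$ on $J(f_1)$. Assumption (a) of Theorem \ref{thm_correlation_num}, applied with $(b_1,b_2)=(a_1,a_2)$, rules this out unless $(a_1,a_2)=(0,0)$, proving the lemma. The main obstacle is the rigidity step upgrading $N$ from locally constant to globally constant when $J(f_1)$ is disconnected: Livsic's theorem pins down the cocycle only modulo an integer-valued locally constant function, and removing this ambiguity is where assumption (b) is consumed in a nontrivial way through the specification-and-shadowing construction. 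Once that step is secured, the contradiction with (a) is immediate.
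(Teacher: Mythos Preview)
Your opening move---applying the $\mathbb{R}/\mathbb{Z}$-valued Livsic theorem to write $\psi = g\circ f_1 - g + N$ with $N$ continuous and integer-valued, hence locally constant---matches the paper's first step. The gap is in what comes next.

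You attempt to upgrade $N$ from locally constant to globally constant using specification and assumption (b), but what you write is an assertion, not an argument. The only identity available along a periodic orbit $\hat x$ is $S_n\psi(x)=S_nN(x)$, and both sides are already integers by hypothesis; taking differences of such identities along shadowing orbits that visit different clopen pieces tells you only that integers differ by integers. The linear independence of the two Lyapunov vectors furnished by (b) lets you manufacture orbits with a wide range of $(\widetilde\lambda_1,\widetilde\lambda_2)$, but nothing in your sketch converts this into a constraint forcing the values of $N$ on distinct clopen pieces to coincide. On a totally disconnected Julia set there is no a priori reason for a locally constant integer cocycle cohomologous to $\psi$ to be constant, and your ``cancellation of the $\hat z$- and $\hat w$-contributions in suitable differences'' does not supply one. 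This step should be regarded as a genuine gap.

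The paper avoids the issue entirely: it never tries to make the integer-valued correction constant. Writing the cohomology equation as $-b\zeta = u\circ f_1 - u + N'$ with $b\neq 0$ and $N'$ valued in $2\pi\mathbb{Z}$ (here $\zeta$ is your $\psi$), one reads off directly that the complex transfer operator for the potential $-(\delta(\zeta)+ib)\zeta$ has spectral radius $1$, i.e.\ $P\bigl(-(\delta(\zeta)+ib)\zeta\bigr)=0$, where $\delta(\zeta)$ is the real zero of $t\mapsto P(-t\zeta)$. Since $\zeta$ is not cohomologous to a constant---this is precisely assumption (a)---the standard peripheral-spectrum argument forces $P\bigl(-(\delta(\zeta)+ib)\zeta\bigr)<0$ for every $b\neq 0$, which is the contradiction. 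In particular the paper's proof uses only assumption (a); assumption (b) plays no role in this lemma and is consumed elsewhere in the proof of Theorem~\ref{thm_correlation_num}.
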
 
\begin{proof}
Suppose by contradiction that 
there exist $a_1, a_2$ with $a_1^2+a_2^2 \neq 0$ such that \eqref{eq:lem_sharp} holds.
Set $\zeta \defeq a_1\log |\Jac f_1| +a_2\log |\Jac f_2 \circ \phi| \colon J(f_1) \to \mathbb{R}$. 
We note that $\zeta$ is not cohomologous to a constant by the
assumption (a) in Theorem \ref{thm_correlation_num}. Since 
we have
$a_1\widetilde{\lambda}_1(\hat{z})+a_2\widetilde{\lambda}_2(\hat{z}) \in \mathbb{Z}$ for every $\hat{z} \in \mathcal{P}(f_1)$, by Livsic theorem
\cite{Livsic71},
there exist $b \in \mathbb{R} \setminus \{0\}$, a
continuous functions $u \colon J(f_1) \to \mathbb{R}$, and a function
$\psi \colon J(f_1) \to 2\pi \mathbb{Z}$ such that
$$-b\zeta = u \circ f - u + \psi.$$
This implies that we have
$P(-(\delta(\zeta) + bi)\zeta)=0$,
where $\delta(\zeta)$ is the unique
real number such that $P(-\delta(\zeta) \zeta)=0$. However, this contradicts the fact that 
we must have $P(-(\delta(\zeta) + bi)\zeta)<0$ 
as
$\zeta$ is not cohomologous to a constant
(see, e.g., \cite[Lemma 5.14]{BD24eq2}). Therefore, we must have $a_1 = a_2 = 0$, as desired.
\end{proof}

We can now give a proof of Theorem \ref{thm_correlation_num}. As the arguments are essentially the same as in \cite{SchwartzSharp93} and \cite{Sharp98}, we will only sketch some parts.

\begin{proof}[Proof of Theorem \ref{thm_correlation_num}]
First of all, one can
construct
a strictly positive H\"older continuous function $\psi \colon J(f_1) \to \mathbb{R}$ such that
\begin{equation}\label{eq:psi-orbits}
\int_{\ell} \psi = \widetilde{\lambda}_2(\hat{z})
\end{equation}
if $\ell$ 
is a periodic orbit of the suspension flow corresponding to a periodic orbit $\hat{z}$.
Moreover, $\psi$ is not cohomologous to a constant.
This implies that the function $P(t\psi)$ is stricty convex, and in particular that 
its
derivative is a strictly increasing function.

\medskip

Thanks to Lemma \ref{lem_sharp},
Lalley's theorem \cite{Lalley87} implies that
(1) holds,
as soon as we can verify that $1$ is a value of the derivative of the maps $t\mapsto P(t\psi)$
(say taken at a unique $t=t_1$, by the monotonicity of the derivative)
and that
the entropy $h$
of the equilibrium state associated to 
$t_1\psi$ satisfies $0<h< k\log d$ 
(observe that $\alpha$ will be then equal to this entropy $h$). 
The proof of this fact
is as in \cite{SchwartzSharp93}
and uses the
assumption (b)
on the
existence of the two cycles $\hat z$ and $\hat w$ with different multipliers.

\bigskip

In order to prove
(2), we will follow
the argument in
\cite[Section 2]{Sharp98}.
Fix $(a,b)\in \mathbb R^2\setminus (0,0)$.
It follows
from \eqref{eq:psi-orbits}
that
the Poincar\'e series
\[\sum_{\hat{z}} e^{-a\widetilde{\lambda}_1(\hat{z}) - b\widetilde{\lambda}_2(\hat{z})} = \sum_{\tau} e^{\int_\tau -a-b\psi}\]
converges whenever $(a,b)$ 
is such that 
 $P(-a-b\psi)<0$
 and diverges when $P(-a-b\psi)>0$, and $\mathcal{C}(f_1,f_2)$ is
equal to
 the set
$$\{(a,b) : P(-a-b\psi)=0\} = \{(a,b) : P(-b\psi)=a\}.$$

We think of $b = b(a)$ as a function of $a$. We are interested in the point on $\mathcal{C}(f_1,f_2)$ with the property $db(a)/da = -1$. For every $(a,b(a))$ on the curve $\mathcal{C}(f_1,f_2)$ (i.e., $a = P(-b(a)\psi)$), taking derivative with respect to $a$ gives
\begin{equation*}
1 = \frac{d}{da}P(-b(a)\psi)
 = -\int \psi d\mu_{-b(a)\psi} \cdot \frac{db(a)}{da}.
\end{equation*}
Therefore, if $db(a)/da =- 1$, we must have
\begin{equation*}
\int \psi d\mu_{-b(a)\psi} = 1.
\end{equation*}

Since $\alpha = h_{\mu_{t_1\psi}}$ where $t_1$
is the point (as in the first part of the proof)
such that $\int \psi d\mu_{t_1\psi} = 1$, we know that $t_1$ is also
the value $-b(a)$ of the point $(a,b)$ with $db(a)/da=1$.
Moreover, since $\mu_{t_1\psi}$
is the
equilibrium state for $t_1\psi$, 
we have
\begin{equation*}
\alpha 
= P(t_1\psi) 
-
\int t_1\psi d\mu_{{t_1\psi}}
= P(t_1\psi) -
t_1
 = a -t_1
 =
 a+b(a).
\end{equation*}
This completes the proof of the theorem.
\end{proof}

\begin{rmk}
As in \cite{SchwartzSharp93}, the constant 
$C$ in Theorem \ref{thm_correlation_num} (1) is related to the second derivative of $P(t\psi)$ at $t=t_1$, and is of order $\sim \epsilon^2$.
\end{rmk}

\printbibliography

\end{document}